\newcommand{\sP}{\mathcal{P}}
\newcommand{\F}{\mathbf{F}}
\newcommand{\Z}{\mathbf{Z}}
\newcommand{\uv}{{\underline{v}}}
\newcommand{\Or}{{\operatorname{Or}}}
\newcommand{\by}[1]{\overset{#1}{\longrightarrow}}
\newcommand{\iso}{\by{\sim}}
\newcommand{\osi}{\overset{\sim}{\longleftarrow}}
\renewcommand{\epsilon}{\varepsilon}
\newcommand{\St}{\operatorname{St}}
\newcommand{\AR}{\operatorname{AR}}
\newcommand{\ar}{\operatorname{ar}}
\newcommand{\rg}{\operatorname{rk}}
\newcommand{\Hom}{\operatorname{{Hom}}}
\newcommand{\Aut}{\operatorname{{Aut}}}
\newcommand{\Pic}{\operatorname{Pic}}
\newcommand{\Stab}{\operatorname{Stab}}
\newcommand{\Set}{\operatorname{\mathbf{Set}}}
\newcommand{\Spl}{\operatorname{\mathbf{Spl}}}
\newcommand{\Ord}{\operatorname{\mathbf{Ord}}}
\newcommand{\Top}{\operatorname{\mathbf{Top}}}
\newcommand{\Simpl}{\operatorname{Simpl}}
\newcommand{\Ker}{\operatorname{Ker}}
\newcommand{\Fr}{\operatorname{Fr}}
\newcommand{\GP}{\operatorname{GP}}
\newcommand{\sd}{\operatorname{sd}}
\newcommand{\Sq}{\operatorname{Sk}}
\newcommand{\sat}{{\operatorname{sat}}}
\newcommand{\inj}{\hookrightarrow}
\newcommand{\surj}{\rightarrow\!\!\!\!\!\rightarrow}
\renewcommand{\phi}{\varphi}
\renewcommand{\epsilon}{\varepsilon}
\newtheorem{thm}{Theorem}
\newtheorem{lemma}{Lemma}
\newtheorem{prop}{Proposition}
\newtheorem{cor}{Corollary}
\theoremstyle{remark}
\newtheorem{rk}{Remark}
\begin{document}
\title{On universal modular symbols}
\author{Bruno Kahn}
\address{IMJ-PRG\\Case 247\\4 place Jussieu\\75252 Paris Cedex 05\\France}
\email{bruno.kahn@imj-prg.fr}
\author{Fei Sun}
\email{fei.sun@icloud.com}
\begin{abstract} We clarify the relationship between works of Lee-Szcza\-rba and Ash-Rudolph on the homology of the Steinberg module of a linear Tits building. This yields a simple proof of the Solomon-Tits theorem in this special case. We also give a (weak) relationship between this combinatorics and the one studied by van der Kallen, Suslin and Nesterenko to compute the homology of the general linear group with constant coefficients.
\end{abstract}
\subjclass[2010]{51E24, 55U99, 11E57}
\date{Aug. 20, 2021}
\maketitle

\section*{Introduction}  In two related papers \cite{lee-sz,AR}, Lee-Szczarba and Ash-Rudolph study the homology of the Steinberg module of a Tits building by means of a canonical resolution \cite[Th. 3.1]{lee-sz} and an explicit set of generators called \emph{universal modular symbols} \cite[Prop. 2.3 and Th. 4.1]{AR}. A first purpose of this note is to clarify the relationship between the two approaches: we shall show in Theorem \ref{p3} that the generators provided by Lee and Szczarba coincide with the universal modular symbols of Ash and Rudolph: this answers a question asked in \cite[end of introduction]{AR}. 

For this, we offer in Theorem \ref{p1} a shorter proof of Lee-Szczarba's Theorem 3.1, which has the advantage to generalise from principal ideal domains to any integral domain $A$. As a byproduct, we get in Corollary  \ref{c1} a short proof of the Solomon-Tits theorem for $GL_n$.
We use the categorical techniques of Quillen \cite[\S 1]{quillen}.


Finally, we give in Proposition \ref{p5} a (rather disappointing) relationship between the Lee-Szczarba resolution and the complexes used by van der Kallen, Suslin and Nesterenko to study the homology of the general linear group of an infinite field.

We thank Lo\"\i c Merel and Gregor Masbaum for helpful hints and  Jo\" el Riou for a critical reading of this note, especially for Remark \ref{rriou} in \S \ref{s2}.

\section{On the universal modular symbol for $n=2$} \label{s1}

Let us review the  Ash-Rudolph construction of the universal modular symbol in \cite[\S 2]{AR}. For coherence with the rest of this paper, we adopt a slightly different notation from theirs. For $r\ge 0$, let $\Delta_r$ be the standard (abstract) simplicial complex based on the set $[r]=\{0,\dots,r-1\}$:  the simplices of $\Delta_r$ are the nonempty subsets of $[r]$. Let $\sd \Delta_r$ denote the first barycentric subdivision of $\Delta_r$: the vertices of $\sd \Delta_r$ are the simplices of $\Delta_r$ and the simplices of $\sd \Delta_r$ are the nonempty sets of simplices of $\Delta_r$ which are totally ordered by inclusion (we shall call such a set a \emph{flag} of simplices). Its boundary $\partial\sd \Delta_r$ is the full subcomplex whose vertices are the nonempty proper subsets of $[r]$.

Let now $V$ be an $n$-dimensional vector space over a field $K$, with $n\ge 2$. The Tits building of $V$, denoted by $T(V)$, is the simplicial complex whose vertices are the (nonzero) proper subspaces of $V$ and simplices are flags of proper subspaces. It has the homotopy type of a bouquet of $(n-2)$-spheres by the Solomon-Tits theorem (\cite{solomon}, \cite[\S 2]{quillen2}; see also Corollary \ref{c1} below). Its $(n-2)$-th homology group is called the Steinberg module of $V$ and denoted by $\St (V)$.

Let $Q=(v_0,\dots,v_{n-1})$ be a sequence of $n$ nonzero vectors of $V$. It defines a simplicial map
\[\phi_Q:\partial \sd \Delta_{n-1}\to T(V)\]
by sending each vertex $I\subsetneq [n-1]$ to $\langle v_i\rangle_{i\in I}$. For $n>2$, the universal modular symbol $[v_0,\dots,v_{n-1}]\in \St (V)$ is defined as $(\phi_Q)_*\zeta$, where $\zeta\in H_{n-2}(\partial \sd \Delta_{n-1})$ is the fundamental class corresponding to the canonical orientation of $\sd \Delta_{n-1}$. By \cite[Prop. 2.2]{AR} the symbol $[Q]=[v_0,\dots,v_{n-1}]$  satisfies the following relations:
\begin{itemize}
\item[(a)] It is anti-symmetric (transposition of two vectors changes the sign of the symbol).
\item[(b)] It is homogeneous of degree zero: $[ a v_0 ,\dots, v_{n-1}] = [v_0,\dots,v_{n-1}]$ for any
nonzero $v_0,\dots,v_{n-1}$.
\item[(c)] $[Q] = 0$ if $\det Q = 0$.
\item[(d)] \label{rel}If $v_0,\dots,v_n$ are all non-zero, then
\[\sum_{i=0}^n(-1)^i[v_0,\dots,\hat{v_i},\dots,v_n]=0.\]
\item[(e)] If $A \in GL(V)$, then $[AQ] =A\cdot [Q]$, the dot denoting the natural action of
$GL(V)$ on $\St (V)$.
\end{itemize}

By \cite[Prop. 2.3]{AR}, the universal modular symbols generate $\St (V)$ (for $n>2$). Relations (a) -- (d) actually \emph{present} $\St (V)$ (Corollary \ref{c2}).

Let us look at the case $n=2$. Then $T(V)$ is the discrete set of lines of $V$, hence $\St (V)=H_0(T(V))$ is the free $\Z$-module over this basis.  The first problem is a definition of the ``fundamental class''  of the non connected discrete space $\partial \sd\Delta_1$. This space consists of the points $0,1$, which form a basis of $H_0(\partial \sd\Delta_1)$. If $\uv=(v_0,v_1)\in (V-\{0\})^2$, $\phi_\uv(i)$ is the line generated by $v_i$. The proof of Relation (c) above given on top of \emph{loc. cit.},  p. 244 is correct for $n>2$, but breaks down for $n=2$ since then  $H_0(\partial\Delta_1)\ne 0$. If we want to save this relation, we must make the right choice of the fundamental class: namely, $\zeta=[1]-[0]\in H_0(\partial \Delta_1)$. But then $[v_0,v_1]=[v_1]-[v_0]$, which is in the kernel of the augmentation $\St (V)\to \Z$ sending each line to $1$. Hence the symbols $[v_0,v_1]$ do not generate $\St (V)$, but rather the ``reduced Steinberg module"
$\widetilde{\St }(V)=\Ker(\St(V)\to \Z)$.

The above mistake is compensated by a  parallel error a little further: in \cite[Def. 3.1]{AR}, the second isomorphism does not exist for $n=2$ (the first author is indebted to Lo\"\i c Merel for pointing this out). The map goes the other way and yields an exact sequence
\[0=H_1(\bar X)\to H_1(\bar X,\partial\bar X)\to H_0(\partial\bar X)\to H_0(\bar X)=\Z\]
which gives an isomorphism $H_0(\partial\bar X)\iso \widetilde{\St}(V)$. This saves \cite[Prop. 3.2]{AR} for $n=2$. (In its proof, l. 4 one should read ``surjective" instead of ``injective".)

In the sequel, we shall write
\begin{equation}\label{eq5}
 \widetilde{\St}(V)=\begin{cases}
 \St(V)&\text{if $n>2$}\\
 \Ker(\St(V)\to \Z)&\text{if $n=2$}\\
 \Z &\text{if $n=1$}\\
 \Z &\text{if $n=0$.} 
 \end{cases}
\end{equation}

\section{Categories and functors}\label{s2}

We shall work with essentially $4$ categories: 
\begin{itemize}
\item $\Set$, the category of (small) sets.
\item $\Ord$, the category of partially ordered sets. Recall that, as in Quillen \cite{quillen}, we may think of a poset as a category.
\item $\Spl$, the category of abstract simplicial complexes \cite[3.1]{spanier}.
\item $\Top$, the category of topological spaces.
\end{itemize}

There are various functors between these categories: we write
\begin{itemize}
\item $E:\Set \to \Spl$  for the functor which sends a set $X$ to the simplicial complex of nonempty finite subsets of $X$.  
\item $B:\Ord\to \Spl$ for the functor sending a poset to the  simplicial complex of its  totally ordered nonempty finite subsets.
\item $\Simpl:\Spl\to \Ord$ for the functor which associates to a simplicial complex the set of its simplices ordered by inclusion. 
\item $|\ |:\Spl\to \Top$ for the geometric realisation functor \cite[3.1, pp. 110--111]{spanier}.
\end{itemize}

For any set $X$, we have $\Simpl E(X) = \sP_f(X)$, the poset of nonempty finite subsets of $X$. If $[n]\in \Set$ is the set $\{0,1,\dots,n-1\}$, then $E( [n]) = \Delta_n$, the standard $n$-simplex.

If $\omega:\Ord\to \Set$ is the forgetful functor, there is an obvious natural  transformation
\[\rho:B\Rightarrow E\circ \omega \]
and $\rho_S$ is an \emph{isomorphism} if $S$ is totally ordered, for example if $S=[n]$.

Note also that $B\circ \Simpl=\sd$ is the functor ``subdivision'' on simplicial complexes (remark of Segal to Quillen, \cite[p. 89]{quillen}).

Finally, we note the natural transfrmation
\[\theta:\Simpl \circ B\Rightarrow Id_{\Ord}\]
such that for $S\in \Ord$, $\theta_S$ maps $\sigma\in \Simpl B(S)$ to $\sup(\sigma)\in S$. Applying $B$ on the left, we get a natural transformation
\[B*\theta: \sd B\Rightarrow B.\]

Applying this to $S=[n]$, we get a canonical map
\begin{equation}
\sd \Delta_n \to \Delta_n
\end{equation}
which is natural for morphisms in $\Ord$ and induces a homotopy equivalence of (contractible) spaces after geometric realisation. From the definition of the latter, it extends to a homotopy equivalence
\begin{equation}\label{eq3}
\epsilon_\Gamma:|\sd \Gamma|\iso |\Gamma|
\end{equation}
which is natural in $\Gamma\in \Spl$.

For any $S\in \Ord$, $|B(S)|$ is naturally homeomorphic to $|N(S)|$, where $N(S)$ is the nerve of the category $S$; conversely, if $\Gamma\in \Spl$, the relation $B\circ \Simpl=\sd$ and \eqref{eq3} yield a natural  homotopy equivalence $|N(\Simpl(\Gamma))|\iso |\Gamma|$   (compare \cite[p. 89]{quillen}). Thus we can work equivalently with simplicial complexes or posets, and use Quillen's techniques from \cite{quillen} when dealing with the latter. Following the practice in \cite{quillen} and \cite{quillen2}, we shall say that a poset, a simplicial complex, or a morphism in $\Ord$ or $\Spl$ have a certain homotopical property if their topological realisations have.

\begin{rk}[J. Riou]\label{rriou} The morphism $\epsilon_\Gamma$ of \eqref{eq3} is not a homeomorphism in general, as the example $\Gamma=\Delta_1$ shows. On the other hand, the homeomorphism $|\sd \Gamma|\approx |\Gamma|$ constructed in \cite[3.3]{spanier} is not natural in $\Gamma$, as seen by considering the morphism $\Delta_2 \to \Delta_1$ identifying the vertices $1,2$. 
\end{rk}

The naturality of \eqref{eq3} is critical for the proof of Theorem \ref{p3} below.

\section{Some well-known lemmas}

\begin{lemma} \label{l1} $E(X)$ is contractible if $X$ is nonempty. 
\end{lemma}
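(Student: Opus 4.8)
The plan is to exhibit $E(X)$ as a \emph{cone} and contract it onto an apex. Since $X\ne\emptyset$, fix a point $x_0\in X$; then $\{x_0\}$ is a vertex of $E(X)$, and the idea is that every simplex can be pushed towards $\{x_0\}$.

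First I would pass to the poset picture of \S\ref{s2}: $|E(X)|$ is naturally homotopy equivalent to $|N(\Simpl E(X))|=|N(\sP_f(X))|$, where $\sP_f(X)$ is the poset of nonempty finite subsets of $X$ ordered by inclusion; so it suffices to show that $\sP_f(X)$, viewed as a category, is contractible. Consider the order-preserving self-map $f\colon \sP_f(X)\to \sP_f(X)$, $S\mapsto S\cup\{x_0\}$, together with the constant map $c\colon \sP_f(X)\to \sP_f(X)$ of value $\{x_0\}$. The inclusions $S\subseteq S\cup\{x_0\}\supseteq \{x_0\}$ furnish natural transformations $\mathrm{id}\Rightarrow f$ and $c\Rightarrow f$. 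By Quillen's observation \cite[\S 1]{quillen} that a natural transformation of functors induces a homotopy between the induced maps on nerves, both $|N(\mathrm{id})|$ and $|N(c)|$ are homotopic to $|N(f)|$; since $|N(c)|$ is a constant map, $|N(\sP_f(X))|$ is contractible, and hence so is $|E(X)|$.

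There is essentially no obstacle here: the only points to verify are that $f$ is genuinely monotone (immediate) and that the two natural transformations are oriented consistently so that the standard homotopy lemma applies (also immediate). I note that one could argue just as well directly on the simplicial complex, $E(X)$ being a cone with apex $x_0$ in the sense that $\sigma\cup\{x_0\}$ is a simplex whenever $\sigma$ is, and a cone being contractible via the evident straight-line deformation onto its apex; but the poset formulation above is the one that recurs in the sequel, so I would present it that way.
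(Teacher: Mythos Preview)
Your proof is correct and is essentially the paper's own argument, packaged slightly differently: the paper observes that the sub-poset $\sP_f(X)_{x_0}$ of subsets containing $x_0$ has a smallest element (hence is contractible) and that the inclusion $\sP_f(X)_{x_0}\hookrightarrow\sP_f(X)$ has the left adjoint $S\mapsto S\cup\{x_0\}$, whereas you use the same map $S\mapsto S\cup\{x_0\}$ directly as a self-map together with the two natural transformations $\mathrm{id}\Rightarrow f\Leftarrow c$. The adjunction unit and the ``minimal element'' transformation are exactly your two arrows, so the content is identical.
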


\begin{proof} Here is one ``\`a la Quillen" (it is a version of the proof for simplicial sets):

Let $x\in X$ and let $\sP_f(X)_x$ be the subset of $\sP_f(X)$ consisting of those finite subsets that contain $x$. This poset has the smallest element $\{x\}$, hence is contractible. But the inclusion $\sP_f(X)_x\subset \sP_f(X)$ (viewed as a functor) has the left adjoint $Y\mapsto Y\cup \{x\}$.
\end{proof}

If $K\in \Spl$ and $r\ge 0$, we denote by $\Sq^r K$ its $r$-th skeleton: it has the same vertices as $K$ and its simplices are the simplices of $K$ of dimension $\le r$.

\begin{lemma}\label{l5} Let $\Gamma \in \Spl$, and let $v$ be a vertex of $\Gamma $. Then, for any $r\ge 0$, the map $\pi_i(\Sq^r \Gamma ,v)\to \pi_i(\Gamma ,v)$ is bijective for $i<r$ and surjective for $i=r$.
\end{lemma}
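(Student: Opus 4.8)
The statement is the standard fact that the inclusion of the $r$-skeleton into a simplicial (or CW) complex is an $r$-equivalence; the plan is to reduce to a cellular-approximation argument carried out combinatorially on $\Gamma$. First I would pass to geometric realisations and recall that $|\Sq^r\Gamma|$ is the $r$-skeleton of the CW-complex $|\Gamma|$, with cells indexed by the simplices of $\Gamma$. For surjectivity of $\pi_i(\Sq^r\Gamma,v)\to\pi_i(\Gamma,v)$ when $i\le r$: given a based map $f\colon S^i\to|\Gamma|$, by the cellular approximation theorem it is homotopic rel basepoint to a map whose image lies in the $i$-skeleton $|\Sq^i\Gamma|\subseteq|\Sq^r\Gamma|$, which gives the lift. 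For injectivity when $i<r$: if two based maps $S^i\to|\Sq^r\Gamma|$ become homotopic in $|\Gamma|$, the homotopy is a map $S^i\times I\to|\Gamma|$ which (again by cellular approximation, rel the already-cellular boundary $S^i\times\partial I$) may be pushed into the $(i+1)$-skeleton $|\Sq^{i+1}\Gamma|\subseteq|\Sq^r\Gamma|$ since $i+1\le r$, so the two maps were already homotopic inside $|\Sq^r\Gamma|$.

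Alternatively — and this is probably the cleaner route to include, since it stays in the combinatorial world the paper is using — I would prove the simplicial approximation version directly: any continuous map from a compact polyhedron (here a sphere, or a sphere times an interval, with a fixed triangulation) into $|\Gamma|$ is, after iterated barycentric subdivision of the source, homotopic to the realisation of a simplicial map, and a simplicial map from an $i$-dimensional complex lands in $\Sq^i\Gamma$. The homotopy produced by simplicial approximation can be taken rel a subcomplex on which the map is already simplicial, which handles the relative statement needed for injectivity. This avoids invoking CW-theory and matches the ``well-known lemmas'' framing of the section.

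The only mild subtlety — what I'd flag as the point needing a sentence of care rather than the ``main obstacle'' — is the basepoint bookkeeping: one must keep the basepoint vertex $v$ fixed throughout, so the simplicial approximations and homotopies should be taken relative to $v$ (a single vertex, already a simplicial subcomplex, so this is harmless). Once that is noted, both directions are immediate from simplicial/cellular approximation, and there is genuinely nothing deeper going on; accordingly I would keep the proof to a few lines, citing \cite{spanier} for simplicial approximation and its relative form, exactly as the surrounding lemmas are treated.
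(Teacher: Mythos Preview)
Your argument is correct, but it is organised differently from the paper's. The paper first reformulates the claim as the vanishing of the relative homotopy groups $\pi_i(\Gamma,\Sq^r\Gamma)$ for $i\le r$, then cites \cite[Ch.~7, \S 6, Lemma~15]{spanier} for the $r$-connectedness of the adjacent pair $(|\Sq^{r+1}\Gamma|,|\Sq^r\Gamma|)$, and finishes by an induction over $s$ showing $\pi_i(\Sq^{r+s}\Gamma,\Sq^r\Gamma)=0$ for all $s\ge 1$, passing to the limit. Your route via cellular (or simplicial) approximation of maps from $S^i$ and $S^i\times I$ is more hands-on and arguably more direct: it bypasses the relative-homotopy reformulation and the induction on skeleta entirely. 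What the paper's approach buys is that it reduces everything to a single black-box citation from Spanier plus a formal induction, whereas yours requires the relative form of cellular approximation (homotopy rel an already-cellular subcomplex), which is equally standard but a slightly different ingredient. Either is perfectly adequate for a ``well-known lemma''.
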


\begin{proof} An equivalent statement is: $\pi_i(\Gamma ,\Sq^r \Gamma )=0$ for $i\le r$. But the pair $(|\Sq^{r+1}\Gamma |,|\Sq^r\Gamma |)$ is $r$-connected by \cite[7.6, Lemma 15]{spanier}. By induction on $s$ this gives $\pi_i(\Sq^{r+s}\Gamma ,\Sq^r \Gamma )=0$ for $i\le r$ and any $s\ge 1$, hence the conclusion in the limit.
\end{proof}

\begin{lemma}\label{l4} Let $X$ be a $r$-dimensional CW-complex which is $(r-1)$-connected. Then $X$ has the homotopy type of a bouquet of $r$-spheres.
\end{lemma}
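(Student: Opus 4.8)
The plan is to build an explicit map from a wedge of $r$-spheres to $X$ and check it is a homotopy equivalence by a homological version of Whitehead's theorem. First I would dispose of the two degenerate low cases. If $r=0$ then $X$ is a nonempty discrete set, which \emph{is} a bouquet of $0$-spheres; if $r=1$ then $X$ is a connected graph, which deformation retracts onto a bouquet of circles by collapsing a spanning tree. So from now on assume $r\ge 2$, and in particular note that $X$ is simply connected.

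The homological input comes next. Since $X$ is $(r-1)$-connected, the Hurewicz theorem gives $\tilde H_i(X)=0$ for $0<i<r$ together with an isomorphism $h:\pi_r(X)\iso H_r(X)$. Since $X$ has dimension $r$, its cellular chain complex is zero above degree $r$, so also $\tilde H_i(X)=0$ for $i>r$; moreover $H_r(X)=\ker(\partial\colon C_r\to C_{r-1})$ is a subgroup of the free abelian group $C_r$ of cellular $r$-chains, hence is itself free abelian (a subgroup of a free abelian group of arbitrary rank is free). Thus $\tilde H_*(X)$ is concentrated in degree $r$, where it is free.

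Then comes the construction: choose a basis $(e_\alpha)_{\alpha\in A}$ of $H_r(X)$, use $h^{-1}$ to represent each $e_\alpha$ by a pointed map $g_\alpha\colon(S^r,*)\to(X,x_0)$, and let these assemble into $f\colon W\to X$ where $W=\bigvee_{\alpha\in A}S^r$. On homology $f_*$ is an isomorphism in every degree: for $i\ne r$ both $\tilde H_i$ vanish, and for $i=r$ the map carries the tautological basis of $H_r(W)=\bigoplus_\alpha H_r(S^r)$ to the basis $(e_\alpha)$ of $H_r(X)$ by construction. Finally, $W$ and $X$ are both simply connected CW-complexes and $f$ induces an isomorphism on all homology, so $f$ is a homotopy equivalence by Whitehead's theorem (via the relative Hurewicz theorem; see \cite{spanier}).

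The point requiring care — rather than genuine difficulty — is the bookkeeping when $X$ has infinitely many cells: one must invoke that subgroups of free abelian groups of any rank are free (so that $H_r(X)$ admits a basis), and that the possibly infinite wedge $W$ is a genuine CW-complex so that the CW form of Whitehead's theorem applies. Separating off the cases $r\le 1$, where Hurewicz and Whitehead are not available in this form, is the other thing to handle explicitly.
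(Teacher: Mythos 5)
Your proof is correct and takes essentially the same route as the paper: for $r\ge 2$ both arguments deduce that $H_r(X)$ is free by embedding it in the top cellular chain group (the paper phrases this as the injection $H_r(X)\hookrightarrow H_r(X,\Sq^{r-1}X)$, which is the same free abelian group), then use Hurewicz to lift a basis to maps $S^r\to X$, assemble them into a map from a wedge of $r$-spheres, and conclude by Whitehead's theorem. The only difference is cosmetic — you spell out the $r\le 1$ cases and the vanishing of $H_i$ for $i>r$, which the paper treats as immediate.
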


Since we could not find a reference for this classical fact, here is a proof: si $r\le 1$, the statement is easy. If $r\ge 2$, the homology exact sequence
\[0=H_{r}(\Sq^{r-1} X)\to H_{r}(X)\to H_{r}(X,\Sq^{r-1} X)\]
injects $H_{r}(X)$ in the homology of a bouquet of $r$-spheres (see previous proof), showing that this group is \emph{free}\footnote{The first author thanks G. Masbaum for showing him this argument.}. Let $(e_i)_{i\in I}$ be a basis of $\pi_{r}(X,x)\iso H_{r}(X)$, where $x$ is some base-point, hence a map
\[f:\bigvee_{i\in I} S^r\to X\]
which is an isomorphism on $H_{r}$, hence a homology equivalence, hence a homotopy  equivalence (Whitehead's theorem, \cite[7.5, Th. 9]{spanier}).\qed

\begin{lemma}\label{l6} Let $\Gamma \in \Spl$. If $\Gamma $ is contractible, then $\Sq^r\Gamma $ has the homotopy type of a bouquet of $r$-spheres for any $r\ge 0$. Moreover, $H_r(\Sq^r \Gamma )$ is the $r$-th homology group of the (na\"\i vely) truncated complex $\sigma^{\le r} \Or_*(\Gamma )$, where $\Or_*(\Gamma )$ is the oriented chain complex of $\Gamma $ \cite[pp. 158--159]{spanier}.
\end{lemma}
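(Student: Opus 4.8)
The plan is to derive the first assertion from Lemmas~\ref{l4} and~\ref{l5}, and to see the second as essentially a matter of definitions.

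First I would note that a contractible $\Gamma$ is nonempty, hence has a vertex $v$, and that $|\Sq^r\Gamma|$ is a CW-complex of dimension $\le r$. Lemma~\ref{l5} gives that $\pi_i(\Sq^r\Gamma,v)\to \pi_i(\Gamma,v)$ is bijective for $i<r$; since $\Gamma$ is contractible all these groups vanish, so $\pi_i(\Sq^r\Gamma,v)=0$ for $i<r$, i.e.\ $\Sq^r\Gamma$ is $(r-1)$-connected. When $\dim\Gamma\ge r$ (so that the dimension of $\Sq^r\Gamma$ is exactly $r$), Lemma~\ref{l4} applied to $X=|\Sq^r\Gamma|$ yields that $\Sq^r\Gamma$ has the homotopy type of a bouquet of $r$-spheres; when $\dim\Gamma<r$ one has $\Sq^r\Gamma=\Gamma$, which is contractible, that is, the empty bouquet (a point), so the statement holds trivially. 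In particular for $r=0$ one recovers the elementary fact that a nonempty discrete set is a bouquet of $0$-spheres.

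For the second assertion, I would simply observe that, by definition of the skeleton, $\Sq^r\Gamma$ has the same simplices as $\Gamma$ in every dimension $\le r$ and none in dimension $>r$. Hence its oriented chain complex \cite[pp. 158--159]{spanier} is literally the na\"\i ve truncation, $\Or_*(\Sq^r\Gamma)=\sigma^{\le r}\Or_*(\Gamma)$, the boundary maps in degrees $\le r$ being unchanged. Since the homology of a simplicial complex is computed by its oriented chain complex, $H_r(\Sq^r\Gamma)=H_r\bigl(\Or_*(\Sq^r\Gamma)\bigr)=H_r\bigl(\sigma^{\le r}\Or_*(\Gamma)\bigr)$; concretely this is the group $\Ker\bigl(\partial\colon \Or_r(\Gamma)\to \Or_{r-1}(\Gamma)\bigr)$ of oriented $r$-cycles of $\Gamma$, there being no $(r+1)$-chains left to bound.

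There is no real obstacle in this argument: both parts are formal consequences of the preceding lemmas and of the definitions. The only mild care needed is in reading off the correct connectivity input for Lemma~\ref{l4} from the range of isomorphisms in Lemma~\ref{l5}, and in handling the boundary cases $r=0$ and $r>\dim\Gamma$.
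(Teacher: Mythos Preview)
Your proof is correct and follows exactly the paper's approach: the first assertion is deduced from Lemmas~\ref{l5} and~\ref{l4}, and the second from the tautological identity $\Or_*(\Sq^r\Gamma)=\sigma^{\le r}\Or_*(\Gamma)$. You have simply spelled out the details (including the harmless boundary cases $r=0$ and $r>\dim\Gamma$) that the paper leaves implicit.
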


\begin{proof} The first statement follows from Lemmas \ref{l5} and \ref{l4}. For the second one, we have $\Or_*(\Sq^r \Gamma )=\sigma^{\le r} \Or_*(\Gamma )$  tautologically.
\end{proof}

\section{A homotopy equivalence}\label{heq}

Let $A$ be an Noetherian domain with quotient field $K$, and let $M$ be a torsion-free finitely generated $A$-module. Write $V=K\otimes_R M$, so that $M$ is a lattice in $V$: we assume $\dim V=n\ge 2$. 

A submodule $N$ of $M$ is \emph{pure} if $M/N$ is torsion-free. Let $G^*(M)$ be the poset of proper pure submodules of $M$ (those different from $0$ and $M$). For $A=K$ we have $BG^*(V)=T(V)$ by definition, and by \cite[Prop. 4.2.4]{quillen-dedekind},  the map $N\mapsto K\otimes_R N$ yields a bijection
\[G^*(M)\iso G^*(V).\]

If $N\subset M$ is a submodule, the \emph{saturation} of $N$ is the smallest pure submodule $N_\sat$ of $M$ which contains $N$: it can be constructed as the kernel of the composition
\[M\to M/N\to (M/N)/\text{torsion}.\]

The following lemma is tautological:

\begin{lemma}\label{l2} Let $N\subseteq M$ be a pure submodule, and let $P$ be a submodule of $N$. Then $P_\sat\subseteq N$.\qed
\end{lemma}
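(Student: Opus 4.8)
The plan is to exploit the two equivalent descriptions of the saturation recalled just above the statement: on the one hand $P_\sat$ is \emph{by definition} the smallest pure submodule of $M$ containing $P$, and on the other hand it is the kernel of the composite $M\to M/P\to (M/P)/\text{torsion}$. Either description makes the inclusion $P_\sat\subseteq N$ immediate, so I would just spell one of them out.

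Using the universal description, the argument is one line: by hypothesis $N$ is a pure submodule of $M$, and it contains $P$ since $P\subseteq N$; hence $N$ belongs to the family of pure submodules of $M$ containing $P$, and as $P_\sat$ is the smallest member of that family we get $P_\sat\subseteq N$. Alternatively, using the explicit kernel description: an element $x\in M$ lies in $P_\sat$ exactly when its image in $(M/P)/\text{torsion}$ is zero, i.e. when $ax\in P$ for some nonzero $a\in A$. Then $ax\in P\subseteq N$ with $a\neq 0$, and since $M/N$ is torsion-free (purity of $N$) the class of $x$ in $M/N$, being annihilated by the nonzero element $a$, must vanish; thus $x\in N$, and $P_\sat\subseteq N$.

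There is essentially no obstacle, which is why the statement is labelled tautological; the only point worth a glance is that "smallest pure submodule containing $P$" is well-posed, but this is already built into the definition of saturation given above (and in any case follows from the kernel construction, or from the fact that an arbitrary intersection of pure submodules of $M$ is again pure). I would therefore present just the one-line argument via the universal property, matching the spirit of the $\qed$ that follows the statement.
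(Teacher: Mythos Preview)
Your proposal is correct and matches the paper's treatment: the lemma is declared tautological and given no proof beyond the $\qed$, and your one-line argument via the universal property of saturation is exactly the intended justification. The alternative kernel argument you supply is also fine and equally in the spirit of the remark preceding the statement.
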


The \emph{rank} of a subset $X$ of $M$ is the dimension of the subvector space of $V$ generated by $X$.  We write $E^*(M)$ for the set of nonempty finite subsets of rank $<n$ in $M-\{0\}$, viewed as a sub-simplicial complex of $E(M-\{0\})$. We then have a non-decreasing map:
\begin{align}\label{eq4}
\AR : \Simpl E^*(M)&\to G^*(M)\\
Y&\mapsto \langle Y\rangle_\sat.\notag
\end{align}

We take Quillen's viewpoint in \cite{quillen} and consider $\AR $ as a functor between the corresponding categories.

\begin{thm}\label{p1} $\AR $ is a homotopy equivalence. 
\end{thm}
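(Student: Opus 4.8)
The plan is to deduce this from Quillen's Theorem A \cite[\S 1]{quillen}. Since $\AR$ is a non-decreasing map of (small) posets, it suffices to show that the comma category $\AR/N$ is contractible for every $N\in G^*(M)$. As $G^*(M)$ is a poset, this comma category is simply the full subposet of $\Simpl E^*(M)$ consisting of those nonempty finite subsets $Y\subseteq M-\{0\}$ of rank $<n$ with $\langle Y\rangle_\sat\subseteq N$.

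The key observation is that this subposet is nothing but $\sP_f(N-\{0\})=\Simpl E(N-\{0\})$. On the one hand, if $Y$ is a nonempty finite subset of $N-\{0\}$, then $\langle Y\rangle\subseteq N$, so $\langle Y\rangle_\sat\subseteq N$ by Lemma \ref{l2}, and moreover $\rg Y\le \rg N<n$ (a proper pure submodule has rank $<n$: a pure submodule of rank $n$ would have torsion-free quotient of rank $0$, hence be zero, forcing the submodule to be all of $M$); thus $Y$ is indeed a simplex of $E^*(M)$ belonging to $\AR/N$. On the other hand, if $\langle Y\rangle_\sat\subseteq N$ then $Y\subseteq\langle Y\rangle_\sat\subseteq N$, and since $Y\subseteq M-\{0\}$ we get $Y\subseteq N-\{0\}$.

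Now $N\ne 0$, so $N-\{0\}$ is nonempty, and the argument proving Lemma \ref{l1} shows that the poset $\sP_f(N-\{0\})$ is contractible: fixing $x\in N-\{0\}$, the inclusion of the subposet of finite subsets containing $x$ (which has least element $\{x\}$, hence is contractible) admits the left adjoint $Z\mapsto Z\cup\{x\}$. Hence $\AR/N$ is contractible for every $N\in G^*(M)$, and Theorem A gives the assertion.

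I do not expect a serious obstacle here: the proof is a direct application of Quillen's criterion. The only point requiring a little care is the identification of the comma poset $\AR/N$, which relies on the purity of $N$ through Lemma \ref{l2} (and on the observation that the rank condition defining $E^*(M)$ is automatic once $Y\subseteq N$); everything else is formal, and in particular no finiteness or Noetherian hypothesis on $A$ intervenes in this step.
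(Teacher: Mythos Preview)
Your proof is correct and follows exactly the paper's approach: apply Quillen's Theorem~A by identifying $\AR/N$ with $\sP_f(N-\{0\})$ via Lemma~\ref{l2} and then invoking Lemma~\ref{l1} for contractibility. You have simply written out in full the details that the paper condenses into two lines.
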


\begin{proof} (Compare \cite[proof of Prop. 3.2]{lee-sz}.)  For $N\in G^*(M)$, we have by Lemma \ref{l2}
\[\AR / N = \sP_f(N-\{0\})\]
which is contractible (Lemma \ref{l1}). Apply \cite[Th. A]{quillen}.
\end{proof}

\begin{cor}[Solomon-Tits]\label{c1} $T(V)$ has the homotopy type of a bouquet of $(n-2)$-spheres.
\end{cor}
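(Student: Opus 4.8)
The plan is to transport the statement across Theorem \ref{p1} and then extract it from the skeleton lemmas of \S 3. First I would apply Theorem \ref{p1} with $A = K$ and $M = V$, so that $\AR\colon \Simpl E^*(V) \to G^*(V)$ is a homotopy equivalence. Applying the functor $B$, which sends homotopy equivalences of posets to homotopy equivalences, and using $B G^*(V) = T(V)$, $B \circ \Simpl = \sd$ and the natural homotopy equivalence $\epsilon_\Gamma\colon |\sd \Gamma| \iso |\Gamma|$ of \eqref{eq3}, one obtains a homotopy equivalence $|T(V)| \simeq |E^*(V)|$. It therefore suffices to show that $E^*(V)$ has the homotopy type of a bouquet of $(n-2)$-spheres.

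Next I would compare $E^*(V)$ with $E(V - \{0\})$, which is contractible by Lemma \ref{l1}. Every subset of $V - \{0\}$ of cardinality at most $n-1$ has rank $< n$, so $E^*(V)$ and $E(V - \{0\})$ have the same $(n-2)$-skeleton; consequently $E^*(V)$ is obtained from $\Sq^{n-2} E(V - \{0\})$ by attaching cells of dimension $\ge n-1$ only (those coming from the ``large linearly dependent'' finite subsets of $V - \{0\}$). By Lemma \ref{l5} the subcomplex $\Sq^{n-2} E(V - \{0\})$ is $(n-3)$-connected, and attaching cells of dimension $\ge n-1$ does not disturb $(n-3)$-connectedness; hence $E^*(V)$, and with it $T(V)$, is $(n-3)$-connected.

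Finally, a flag of proper nonzero subspaces of $V$ has at most $n-1$ terms, so $T(V)$ is a simplicial complex of dimension $\le n-2$; being moreover $(n-3)$-connected, it has the homotopy type of a bouquet of $(n-2)$-spheres by Lemma \ref{l4}. The only real content is the middle step---the observation that $E^*(V)$ coincides with the $(n-2)$-skeleton of a contractible complex below dimension $n-1$ and is obtained from it by attaching higher cells only; granting that, both the Solomon--Tits connectivity and the wedge decomposition are formal, and the edge cases $n=2,3$ (where $(n-3)$-connected means ``nonempty'' resp. ``path-connected'') cause no trouble.
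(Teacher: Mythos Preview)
Your proof is correct and follows essentially the same route as the paper's: apply Theorem~\ref{p1} with $A=K$ to replace $T(V)$ by $E^*(V)$, observe that $E^*(V)$ and the contractible $E(V-\{0\})$ share their $(n-2)$-skeleton (so $E^*(V)$ is $(n-3)$-connected via Lemmas~\ref{l1} and~\ref{l5}), and conclude by Lemma~\ref{l4} since $\dim T(V)\le n-2$. The only cosmetic difference is that you phrase the connectivity step as ``attaching cells of dimension $\ge n-1$'' rather than invoking Lemma~\ref{l5} a second time for $E^*(V)$; both are equivalent.
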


\begin{proof} We choose $A=K$ in Theorem \ref{p1}. On the one hand, the $p$-chains of $E^*(V)$ and $E(V-\{0\})$ coincide for $p\le n-2$, hence $T(V)$ is $(n-3)$-connected by Lemmas \ref{l1} and \ref{l5}.  On the other hand, $\dim T(V)\le n-2$. We conclude with Lemma \ref{l4}.
\end{proof} 

\section{The case of a principal ideal domain} \label{pid}

Keep the notation of the previous section. An element $v\in M$ is \emph{unimodular} if there exists a linear form $\theta:M\to A$ such that $\theta(v)=1$. We write $U(M)$ for the set of unimodular vectors of $M$.

\begin{lemma}\label{l3} If $A$ is principal, $U(M)\cap N$ is nonempty for any nonzero pure submodule $N\subseteq M$. 
\end{lemma}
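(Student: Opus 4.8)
The plan is to use the structure theory of finitely generated modules over a principal ideal domain. First I would note that, since $A$ is principal and $M$ is finitely generated and torsion-free, $M$ is free of finite rank. Because $N$ is pure, the quotient $M/N$ is finitely generated and torsion-free, hence itself free; consequently the short exact sequence $0\to N\to M\to M/N\to 0$ splits, so $N$ is a direct summand of $M$, say $M=N\oplus C$.

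Next, since $N\subseteq M$ is a nonzero submodule of a free module over a principal ideal domain, $N$ is itself free of rank $\ge 1$. Choose a basis $(e_1,\dots,e_k)$ of $N$ with $k\ge 1$ and a basis $(f_1,\dots,f_\ell)$ of $C$; together these form a basis of $M$. Taking $v=e_1\in N$ and letting $\theta\colon M\to A$ be the coordinate form dual to $e_1$ with respect to this basis, we get $\theta(v)=1$, so $v\in U(M)\cap N$, which is therefore nonempty.

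I do not expect any real obstacle here: the only inputs are that finitely generated torsion-free modules over a PID are free, and that submodules of free modules over a PID are free. One can also phrase the argument without mentioning a complement, by observing directly that a basis element of $N$ is part of a basis of $M$ precisely because $N$ is a direct summand of $M$; the purity hypothesis is exactly what provides this splitting, and it is where the result would fail for a general pure submodule of a non-free module. It may be worth remarking that the same proof shows more: \emph{every} basis element of $N$ is unimodular in $M$, so $U(M)\cap N$ is in fact large.
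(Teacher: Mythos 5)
Your proof is correct and takes essentially the same route as the paper's: purity forces $M/N$ to be free, so the sequence splits and $N$ is a direct summand, whence a basis vector of $N$ extends to a basis of $M$ and is therefore unimodular. The only cosmetic difference is that the paper first reduces to the case $\operatorname{rk} N = 1$ (via Lemma \ref{l2}), whereas you argue directly for general $N$; the content is identical.
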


\begin{proof} It suffices to prove this when $N$ has rank $1$. Then $N$ is free, with generator $v$. Since $M/N$ is torsion-free, it is free, hence $N$ is a direct summand in $M$. This readily implies that $v$ is unimodular.
\end{proof}

If $A$ is principal, let $U^*(M)$ be the set of nonempty finite subsets of rank $<n$ in $U(M)$: this is a sub-simplicial complex of $E^*(M)$.

\begin{prop}\label{p2}  The restriction $\AR^u$ of the functor $\AR$ of \eqref{eq4} to $\Simpl U^*(M)$ is a homotopy equivalence.
\end{prop}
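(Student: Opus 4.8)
The plan is to mimic the proof of Theorem \ref{p1}, applying Quillen's Theorem A to the functor $\AR^u$. The key point is to compute the comma categories $\AR^u/N$ for $N \in G^*(M)$ and show they are contractible. By Lemma \ref{l2}, just as in the proof of Theorem \ref{p1}, we have
\[\AR^u/N = \{Y \in \Simpl U^*(M) : \langle Y\rangle_\sat \subseteq N\} = \sP_f(U(M) \cap N).\]
Since $A$ is principal and $N$ is a nonzero pure submodule of $M$, Lemma \ref{l3} guarantees that $U(M) \cap N$ is nonempty, so by Lemma \ref{l1} the poset $\sP_f(U(M)\cap N)$ is contractible. Then \cite[Th. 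A]{quillen} gives that $\AR^u$ is a homotopy equivalence.

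I should be a little careful about one point in the identification of the comma category: an element of $\AR^u/N$ is a nonempty finite subset $Y$ of $U(M)$ of rank $<n$ with $\langle Y\rangle_\sat \subseteq N$; since $N$ itself has rank $\le n-1$, any finite subset $Y \subseteq U(M)\cap N$ automatically has rank $< n$ and automatically lies in $U^*(M)$, so the rank condition is vacuous here and the identification with $\sP_f(U(M)\cap N)$ is clean. One subtlety worth a remark: I need the subsets $Y$ to be nonempty, which is why $U(M)\cap N$ being nonempty (via Lemma \ref{l3}) is exactly what is required — it is both the nonemptiness needed for $\sP_f(U(M)\cap N)$ to be contractible via Lemma \ref{l1} and the nonemptiness needed for the comma category to be nonempty at all.

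The main (and really only) obstacle is verifying that the hypotheses of Lemma \ref{l3} apply: one needs $N$ to be a \emph{nonzero pure} submodule, which is automatic since $N \in G^*(M)$. So there is essentially no obstacle — the proof is a one-line variant of that of Theorem \ref{p1}, substituting Lemma \ref{l3} for the trivial observation that $N - \{0\}$ is nonempty. The whole content of the proposition is that passing from all of $M - \{0\}$ to just the unimodular vectors $U(M)$ does not change the homotopy type, and the reason is precisely that every nonzero pure submodule still contains a unimodular vector when $A$ is a PID.

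\begin{proof} Exactly as in the proof of Theorem \ref{p1}, for $N\in G^*(M)$ we have by Lemma \ref{l2}
\[\AR^u/N = \sP_f(U(M)\cap N).\]
Since $A$ is principal and $N$ is a nonzero pure submodule, $U(M)\cap N\ne\emptyset$ by Lemma \ref{l3}, so this poset is contractible by Lemma \ref{l1}. Apply \cite[Th. A]{quillen}.
\end{proof}
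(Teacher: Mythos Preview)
Your proof is correct and is essentially identical to the paper's own argument: both compute $\AR^u/N=\sP_f(U(M)\cap N)$ via Lemma~\ref{l2}, invoke Lemma~\ref{l3} to ensure this set is nonempty, apply Lemma~\ref{l1} for contractibility, and conclude with Quillen's Theorem~A. Your added remarks about the rank condition being vacuous and nonemptiness are accurate and harmless elaborations.
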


\begin{proof} Same as for Theorem \ref{p1}, using Lemma \ref{l3}: here, $\AR^u/N=\sP_f(U(M)\cap N)$.
\end{proof}

\section{Comparison of the Ash-Rudolph and Lee-Szczarba constructions}  From \eqref{eq3} we get a zig-zag of isomorphisms  
\begin{equation}\label{eq2}
H_{n-2}(E^*(M))\osi H_{n-2}(\sd E^*(M))\iso \St(V)
\end{equation} 
induced by $B(\AR )$ and $\epsilon_{E^*(M)}$.

The singular chain complex of $E(M-\{0\})$ is given by
\[C_p(E(M-\{0\})) = \Z[(v_0,\dots,v_p)\mid v_i\in M-\{0\}].\]

That of $E^*(M)$ is given by
\[C_p(E^*(M)) = \Z[(v_0,\dots,v_p)\mid \rg \langle v_0,\dots, v_p\rangle <n].\]

Write $\bar C_*=C_*(E(M-\{0\}),E^*(M))$ for the quotient  complex. As $E(M-\{0\})$ is contractible, we have by Theorem \ref{p1}:
\[H_i(\bar C_*)\iso
\begin{cases}
 \widetilde{\St}(V)&\text{if $i=n-1$}\\
 0&\text{else}
 \end{cases}\]
(see \eqref{eq5} for $\widetilde{\St}(V)$).

Now $\bar C_p$ is isomorphic to the free $\Z$-module with basis the $(v_0,\dots, v_p)$ with $\dim \langle v_0,\dots v_p\rangle =n$. In particuliar, $\bar C_p=0$ for $p< n-1$.  Hence a resolution \`a la Lee-Szczarba \cite[th. 3.1]{lee-sz}:
\begin{equation}\label{eq1}
\dots\by{d_{n+1}}\bar C_n\by{d_n} \bar C_{n-1}\by{\ar} \widetilde{\St}(V)\to 0.
\end{equation}

To get back \cite[th. 3.1]{lee-sz} in the case where $A$ is principal (replacing $C_*(E^*(M))$ by $C_*(U^*(M))$), we use Proposition \ref{p2}.

\begin{thm}\label{p3} Modulo the isomorphisms of \eqref{eq2}, the map $\ar$ of \eqref{eq1} sends a generator $Q=(v_0,\dots,v_{n-1})$ to the universal modular symbol  $[v_0,\dots,v_{n-1}]$ of Ash-Rudolph.
\end{thm}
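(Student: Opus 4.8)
The plan is to unwind the two descriptions of $\widetilde{\St}(V)$ and check that they carry the relevant fundamental classes to each other. Recall that the isomorphism $H_{n-1}(\bar C_*)\iso \widetilde{\St}(V)$ of \eqref{eq1} comes from the long exact sequence of the pair $(E(M-\{0\}), E^*(M))$ together with the contractibility of $E(M-\{0\})$, so the connecting map $H_{n-1}(\bar C_*)\to H_{n-2}(E^*(M))$ is an isomorphism, and $\ar$ is by definition (the composite of) the inverse of this connecting map with the zig-zag \eqref{eq2}. A generator $Q=(v_0,\dots,v_{n-1})$ of $\bar C_{n-1}$ is a top chain $(\dim\langle v_0,\dots,v_{n-1}\rangle = n$), and since $\bar C_{n-2}=0$ it is automatically a cycle in $\bar C_*$; its boundary in $C_*(E^*(M))$ is the chain $\partial Q = \sum_{i=0}^{n-1}(-1)^i(v_0,\dots,\widehat{v_i},\dots,v_{n-1})$, which is a cycle in $C_{n-2}(E^*(M))$ representing the connecting-map image of $Q$. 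So $\ar(Q)$ is the image of the class $[\partial Q]\in H_{n-2}(E^*(M))$ under \eqref{eq2}.

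Next I would pull this class back across \eqref{eq2}. Under $\epsilon_{E^*(M)}^{-1}$, the cycle $\partial Q$ corresponds to a cycle in $\sd E^*(M)$ supported on the subdivisions of the $(n-2)$-simplices $(v_0,\dots,\widehat{v_i},\dots,v_{n-1})$; concretely, the subcomplex of $\sd E^*(M)$ on the proper nonempty subsets of $\{v_0,\dots,v_{n-1}\}$ is exactly the image of $\partial\sd\Delta_{n-1}$ under the simplicial map $\sd$ of the "vertex map" $[n-1]\to M-\{0\}$, $i\mapsto v_i$, and the class $[\partial Q]$ matches (up to the canonical orientation of $\sd\Delta_{n-1}$) the fundamental class $\zeta\in H_{n-2}(\partial\sd\Delta_{n-1})$. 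Then applying $B(\AR)$ to this cycle and composing with $\Simpl E^*(M)\to G^*(M)$ sends each vertex $I\subsetneq[n-1]$ (i.e. each proper nonempty subset of $\{v_0,\dots,v_{n-1}\}$) to $\langle v_i\rangle_{i\in I}$, pure-saturated — but a subspace of $V$ needs no saturation, so this is literally the map $\phi_Q:\partial\sd\Delta_{n-1}\to T(V)$ of Ash-Rudolph. Hence the image of $\zeta$ is $(\phi_Q)_*\zeta = [v_0,\dots,v_{n-1}]$ by definition.

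The one point that needs care — and which I expect to be the main obstacle — is the \emph{compatibility of orientations and signs}: one must check that the fundamental class $\zeta$ used by Ash-Rudolph (from the canonical orientation of $\sd\Delta_{n-1}$) is carried by $\epsilon$ to the class $[\partial Q]$ \emph{with the correct sign}, i.e. that the alternating sum $\sum(-1)^i(v_0,\dots,\widehat{v_i},\dots,v_{n-1})$ is precisely the boundary orientation induced on $\partial\sd\Delta_{n-1}\cong \partial\Delta_{n-1}$ from the standard orientation of $\Delta_{n-1}$. This is a bookkeeping statement about the homotopy equivalence $\epsilon_\Gamma$ of \eqref{eq3} on chains; since $\epsilon_\Gamma$ comes from the natural transformation $B*\theta$, one reduces to the model case $\Gamma=\Delta_{n-1}$, where $\epsilon$ is the standard "last-vertex" subdivision map, whose effect on fundamental classes is classical. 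Here the naturality of \eqref{eq3} (emphasised in Remark \ref{rriou}) does the work: it guarantees that the identification of $\zeta$ with $[\partial Q]$ is compatible with the vertex map $i\mapsto v_i$, so that only the universal case $\Gamma=\Delta_{n-1}$ needs to be verified. Once the signs are pinned down in that one case, the rest is a diagram chase through \eqref{eq1}, \eqref{eq2} and the definition of $\phi_Q$. For $n=2$ one checks separately, using the description at the end of \S\ref{s1}, that $\ar(v_0,v_1)=[v_1]-[v_0]=[v_0,v_1]$, which matches the convention $\zeta=[1]-[0]$.
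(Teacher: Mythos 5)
Your plan is essentially the paper's proof read in the opposite direction: you trace the generator $(v_0,\dots,v_{n-1})$ of $\bar C_{n-1}$ forward through $\partial$, $\epsilon^{-1}$ and $B(\AR)$, while the paper starts from $[Q]=(\phi_Q)_*\zeta$ and factors $\phi_Q$ through $\sd E^*(M)$; in both cases the decisive ingredients are the factorisation $\phi_Q=B(\AR)\circ\tilde\phi$, the naturality of the subdivision equivalence $\epsilon$ (reducing the orientation match to the universal case $\partial\Delta_{n-1}\subset\Delta_{n-1}$), and the commutative diagram comparing the pair $(\Delta_{n-1},\partial\Delta_{n-1})$ with $(E(M-\{0\}),E^*(M))$. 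You leave the final orientation check as ``a diagram chase'' where the paper writes out the two exact sequences explicitly, but the reduction you describe is precisely what makes that chase work, so the two arguments coincide.
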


\begin{proof} The point is to get rid of subdivisions ``without calculation". For simplicity, write $\phi:=\phi_Q$.
Observe first  that $\phi$ factors as
\[\partial \sd\Delta_{n-1}\by{\tilde \phi} \sd E^*(M)= B\Simpl E^*(M)\by{B(\AR )} T(V)\]
where $\tilde \phi$ is the simplicial map sending a vertex $s$ of $\partial\sd \Delta_{n-1}$ to $\{v_i\mid i\in s\}$. 

There is an isomorphism of simplicial complexes  (induced by the inclusion $\partial\Delta_{n-1}\subset \Delta_{n-1}$)
\[\lambda:\sd \partial\Delta_{n-1} \iso \partial \sd\Delta_{n-1}.\]

The composition $\tilde \phi\circ \lambda$ is just $\sd \psi$, where $\psi: \partial\Delta_{n-1}\to E^*(V)$ is the restriction of $E(\Psi)$ with
\[\Psi:[n-1]\to M-\{0\}, \quad i\mapsto v_i.\]

By the  naturality of $\epsilon$ (\emph{cf.} \eqref{eq3}), we therefore have a commutative  diagram
\[\begin{CD}
|\sd\partial \Delta_{n-1}|@>|\lambda|>\sim>|\partial \sd\Delta_{n-1}|@>|\tilde \phi|>> |\sd E^*(V)|@>|B(\AR )|>> |T(V)|\\
@V{\epsilon_{\partial \Delta_{n-1}}}V{\wr}V &&@V{\epsilon_{E^*(V)}}V{\wr}V\\
|\partial \Delta_{n-1}| &@>|\psi|>> &|E^*(V)|.
\end{CD}\]

For $n>2$, if $\zeta'$ denotes the fundamental class  of $H_{n-1}(\sd \partial\Delta_{n-1})$ and $\zeta''$ denotes that of $H_{n-1}(\partial\Delta_{n-1})$, we have
\begin{align*}
\zeta &= \lambda_* \zeta'\\
\zeta'' &= (\epsilon_{\partial \Delta_{n-1}})_*\zeta'\\
[v_0,\dots,v_{n-1}]&=B(\AR )_*\circ \tilde\phi_*(\zeta)=B(\AR )_*\circ \tilde\phi_*\circ \lambda_*(\zeta').
\end{align*}

For $n=2$, define (\emph{cf.} \S \ref{s1}) the fundamental class $\zeta''$ of $H_{n-2}(\partial\Delta_{n-1})$ as the image of the ``positive"  generator of $H_{n-1}(\Delta_{n-1},\partial\Delta_{n-1})$, namely $[1] - [0]$, and $\zeta,\zeta'$ as the corresponding classes: the same identities hold.  It now suffices to show that $\psi_*(\zeta'')=(v_0,\dots,v_{n-1})\in H_{n-1}(E^*(V))$. 

For this, consider the commutative  diagram of exact sequences of complexes
\[\begin{CD}
0&\to& C_*(\partial\Delta_{n-1})@>>> C_*(\Delta_{n-1})@>>> C_*(\Delta_{n-1},\partial\Delta_{n-1})&\to& 0\\
&& @VVV @VVV @VVV\\
0&\to& C_*(E^*(V))@>>> C_*(E(V-\{0\})@>>> \bar C_*&\to& 0
\end{CD}\]
hence a commutative diagram
\[\begin{CD}
0&\to& H_{n-1}(\Delta_{n-1},\partial\Delta_{n-1})@>>> H_{n-2}(\partial\Delta_{n-1})@>>> H_{n-2}(\Delta_{n-1})\\
&& @VVV @VVV @VVV\\
0&\to& H_{n-1}(\bar C_*)@>>>H_{n-2}(E^*(V)) @>>> H_{n-2}(E(V-\{0\})
\end{CD}\]

For any $n\ge 2$, $\zeta''$ is the image of the element in $H_{n-1}(\Delta_{n-1},\partial\Delta_{n-1})$ represented by the cycle $z\in C_{n-1}(\Delta_{n-1},\partial\Delta_{n-1})$, image of the class  of the identity $\Delta_{n-1}\to \Delta_{n-1}$ in $C_{n-1}(\Delta_{n-1})$. By functoriality, the image of $z$ in $H_{n-1}(\bar C_*)$ is the image of $(v_0,\dots,v_{n-1})\in C_{n-1}(E(V-\{0\})$.
\end{proof}

\begin{cor} \label{c2} The group $\widetilde{\St}(V)$ is presented by the Ash-Rudolph relations (a)--(d) of \S \ref{s1}.
\end{cor}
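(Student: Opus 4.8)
The plan is to build a free resolution of $\widetilde{\St}(V)$ realising the Ash--Rudolph presentation, and then compare it with the Lee--Szczarba resolution \eqref{eq1}. Concretely, let $P_0$ be the free $\Z[GL(V)]$-module (or merely free $\Z$-module, since relation (e) is automatic once we work $GL(V)$-equivariantly) on symbols $[v_0,\dots,v_{n-1}]$ with all $v_i\ne 0$, and let $R\subseteq P_0$ be the submodule generated by the elements coming from relations (a), (b), (c) and (d). I want to show $P_0/R\iso\widetilde{\St}(V)$ via the map induced by Theorem \ref{p3}.

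\emph{Surjectivity.} By \cite[Prop. 2.3]{AR} (for $n>2$) and by the discussion in \S\ref{s1} (for $n=2$), the universal modular symbols generate $\widetilde{\St}(V)$, so $P_0\twoheadrightarrow\widetilde{\St}(V)$ is onto and factors through $P_0/R$ since the $[Q]$ satisfy (a)--(d).

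\emph{Injectivity.} Here is where \eqref{eq1} enters. By Theorem \ref{p3}, the map $\ar:\bar C_{n-1}\to\widetilde{\St}(V)$ is, after the identifications, exactly the surjection sending $(v_0,\dots,v_{n-1})$ to $[v_0,\dots,v_{n-1}]$; but recall $\bar C_{n-1}$ has basis the \emph{nondegenerate} tuples (those of rank $n$). So first I reduce $P_0/R$ to the span of nondegenerate symbols: relation (c) kills the degenerate generators, and then $P_0/R$ is a quotient of $\bar C_{n-1}/R'$ where $R'\subseteq\bar C_{n-1}$ is the image of $R$, i.e. the span of the instances of (a), (b), (d) on nondegenerate tuples (noting (a) and (b) on nondegenerate tuples again land in $\bar C_{n-1}$). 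Since \eqref{eq1} is a resolution, $\widetilde{\St}(V)=\bar C_{n-1}/\ker\ar=\bar C_{n-1}/\operatorname{im}d_n$, so it suffices to check that $\operatorname{im}d_n\subseteq R'$, i.e. that the Lee--Szczarba boundary relations are consequences of (a), (b), (d). The boundary $d_n$ of a basis element $(v_0,\dots,v_n)$ of $\bar C_n$ is the alternating sum $\sum_i(-1)^i(v_0,\dots,\hat v_i,\dots,v_n)$ projected into $\bar C_{n-1}$, i.e. with the rank-$(n-1)$ terms dropped. When $\dim\langle v_0,\dots,v_n\rangle=n$ this is visibly relation (d) restricted to its nondegenerate terms — but (d) itself, read in $\widetilde{\St}(V)$, already has its degenerate terms equal to $[v_0,\dots,\hat v_i,\dots,v_n]$ for degenerate arguments, and those must be shown to be in $R'$; this is handled by relation (c). For the remaining basis elements of $\bar C_n$, one checks $d_n$ vanishes or is again absorbed. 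The anti-symmetry relation (a) is needed to match the sign conventions in passing between the simplicial orientation and the symbol, and homogeneity (b) was already used to make $[v_0,\dots,v_{n-1}]$ depend only on the tuple of lines, consistently with $\phi_Q$.

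\emph{The $n=2$ case.} Here $\widetilde{\St}(V)=\ker(\St(V)\to\Z)$ and $[v_0,v_1]=[v_1]-[v_0]$; relations (a), (b) are clear, (c) is vacuous (rank $<2$ never occurs among the relevant symbols, or gives $[v_0,v_0]=0$ which is (a)), and (d) reads $[v_1,v_2]-[v_0,v_2]+[v_0,v_1]=0$, i.e. the cocycle identity for differences of basis elements. One checks directly that $\ker(\St(V)\to\Z)$ is presented by these, which is the $n=2$ shadow of the general argument.

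\emph{Main obstacle.} The delicate point is the bookkeeping in the injectivity step: one must verify that dropping the rank-$(n-1)$ terms from the Lee--Szczarba differential $d_n$ still yields an element of the submodule $R'$ generated by the nondegenerate-tuple instances of (a), (b), (d), using (c) to dispose of the degenerate terms of (d). Equivalently, one shows the natural surjection $\bar C_{n-1}/R' \to \widetilde{\St}(V)$ has a section, or directly that $R'=\operatorname{im}d_n$; the inclusion $R'\supseteq\operatorname{im}d_n$ is the content just sketched, and $R'\subseteq\operatorname{im}d_n$ follows because each generator of $R'$ is, by \cite[Prop. 2.2]{AR} or a direct computation, a boundary (it maps to $0$ in $\widetilde{\St}(V)=\bar C_{n-1}/\operatorname{im}d_n$). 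This last verification — that each of (a), (b), (d) is a $d_n$-boundary, not merely in the kernel of $\ar$ — is the only place a small amount of explicit simplicial computation seems unavoidable, and I would reduce it by noting that (d) is, up to the identification of Theorem \ref{p3}, the image under $d_n$ of a single nondegenerate generator, while (a) and (b) follow from functoriality of the fundamental class under permutations and scalings of the $v_i$ together with the anti-symmetry/degeneracy of $\bar C_*$.
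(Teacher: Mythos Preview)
Your argument is essentially the same as the paper's and is correct, but you have tied yourself in knots at the end. Once you have $P_0/\langle(c)\rangle\cong \bar C_{n-1}$ and $\widetilde{\St}(V)=\bar C_{n-1}/\operatorname{im}d_n$, the only thing to check for injectivity is $\operatorname{im}d_n\subseteq R'$, and this is immediate: the image of each instance of (d) in $\bar C_{n-1}$ is either $0$ (when $\rg\langle v_0,\dots,v_n\rangle<n$) or literally $d_n(v_0,\dots,v_n)$. There are no ``remaining basis elements of $\bar C_n$'' to worry about. Conversely, the inclusion $R'\subseteq\operatorname{im}d_n$ needs no ``explicit simplicial computation'': since the resolution \eqref{eq1} is exact, $\ker\ar=\operatorname{im}d_n$, so ``maps to $0$ in $\widetilde{\St}(V)$'' and ``is a $d_n$-boundary'' are \emph{the same thing}. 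Your ``main obstacle'' therefore evaporates.

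The paper organises this more cleanly by going in one direction only: it reads the presentation by (c) and (d) directly off the resolution (namely $\bar C_{n-1}=P_0/\langle(c)\rangle$ and $\operatorname{im}d_n=$ image of (d)), and then shows that (a) and (b) are \emph{formal consequences} of (c) and (d), via the two one-line boundary computations
\[
\partial[g_0,\dots,g_{i+1},g_i,g_{i+1},\dots,g_{n-1}],\qquad \partial[g_0,ag_0,g_1,\dots,g_{n-1}].
\]
This makes the argument self-contained: no appeal to \cite[Prop.~2.2, 2.3]{AR} is needed, whereas your write-up invokes both. (In particular, surjectivity is already given by the exactness of \eqref{eq1} at $\widetilde{\St}(V)$, so quoting \cite[Prop.~2.3]{AR} is redundant.) A small slip in your $n=2$ paragraph: relation (c) is not vacuous there; it is exactly what kills $[v_0,av_0]$, and the paper's derivation of (b) from (c),(d) uses it.
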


\begin{proof}  Indeed, we may take $A=K$ in Theorem \ref{p3}; one should view $\bar C_{n-1}=C_{n-1}(E(V-\{0\})/C_{n-1}(E^*(V)))$ as the quotient of the free $\Z$-module with basis the $(v_0,\dots,v_{n-1})$ by the relations $(v_0,\dots,v_{n-1})\equiv 0$ if $\dim \langle v_0,\dots v_{n-1}\rangle <n$. This gives Relation (c), and Relation (d) comes from $d_n$. On the other hand, one easily checks that Relations (a) and (b) formally follow from (c) and (d). Namely,  by (c) and (d) we have the identity:
\begin{multline*}
\partial [g_0,\dots ,g_{i+1},g_i,g_{i+1}, \dots ,g_{n-1}]\\ 
=(-1)^i[g_0,\dots,g_{i+1},g_i, \dots, g_{n-1}] + (-1)^{i+2}[g_0,\dots, g_i, g_{i+1},\dots, g_{n-1}] = 0
\end{multline*}
which implies (a).  For (b), we have
\[\partial [g_0, ag_0, g_1, \dots , g_{n-1}] = [ag_0,g_1,\dots , g_{n-1}] -[g_0,g_1,\dots , g_{n-1}] = 0.\]
\end{proof}

\begin{rk} Together with Proposition, \ref{p2}, Theorem \ref{p3} also shows that Theorem 3.1 of \cite{lee-sz} implies Theorem 4.1 of \cite{AR}, \emph{cf.} \cite[end of introduction]{AR}.
\end{rk}

\section{The case of a Dedekind domain}

If $A$ is a Dedekind domain but is not principal, Lemma \ref{l3} is false even for $M=A^2$. Indeed, let $I\subset A$ be a nonprincipal ideal: it is generated by $2$ elements \cite[\S 1, Ex. 11 a) or \S 2, Ex. 1 a)]{bbki}, hence a surjection $A^2\surj I$ and an injection
\[I^*\inj A^2\]
where $I^*=\Hom(I,A)$. By construction $I^*$ is pure in $A^2$; if it contained a unimodular vector, there would be a linear form $\theta:A^2\to A$ such that $\theta_{|I^*}$ is surjective, hence bijective.  But then $I^*$, hence $I$, would be free, a contradiction. In fact:

\begin{lemma}\label{l3d} If $A$ is Dedekind, $U(M)\cap N\neq \emptyset$ for any pure submodule $N\subseteq M$ such that $\rg N>1$. If $\rg N=1$, $U(M)\cap N\neq \emptyset$ if and only if $N$ is free.
\end{lemma}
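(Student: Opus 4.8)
The plan is to reduce the statement to the purely intrinsic question of whether the module $N$ itself carries a unimodular vector, and then to appeal to the structure theory of finitely generated projective modules over a Dedekind domain.

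First I would exploit purity. Since $N$ is pure in $M$, the quotient $M/N$ is finitely generated and torsion‑free, hence projective (a finitely generated torsion‑free module over a Dedekind domain is projective), so the sequence $0\to N\to M\to M/N\to 0$ splits; fix a retraction $p\colon M\to N$. For $v\in N$ I claim that $v\in U(M)$ if and only if there is a linear form $N\to A$ carrying $v$ to $1$: restricting a form $M\to A$ to $N$ gives one implication, composing such a form on $N$ with $p$ gives the other. Hence $U(M)\cap N=U(N)$, and I may assume $M=N$. The problem now reads: does the finitely generated projective module $N$ have a unimodular vector, and — in rank $1$ — when?

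For $\rg N=1$ this is immediate. If $N$ is free, its generator is unimodular. Conversely, a unimodular $v\in N$ comes with $\theta\colon N\to A$ satisfying $\theta(v)=1$; its kernel is a rank‑zero submodule of the torsion‑free module $N$, hence $0$, and its image is an ideal of $A$ containing $1$, hence all of $A$, so $\theta\colon N\iso A$ and $N$ is free. For $\rg N=r\ge 2$ I would invoke Steinitz's theorem, $N\cong A^{r-1}\oplus I$ for some nonzero ideal $I\subseteq A$; since $r-1\ge 1$, projection onto a free factor shows that $(1,0,\dots,0)$ is a unimodular vector of $N$. (Alternatively one may quote Serre's splitting theorem, which applies because $\dim A=1<r$.)

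I do not expect a serious obstacle here: purity is used only to split $N$ off $M$, and everything else is either formal or a citation. The real content of the lemma is the contrast with Lemma \ref{l3}: the rank‑$1$ clause is exactly where dropping principality costs something, a rank‑one pure submodule over a non‑principal Dedekind domain need not be free, as witnessed by the inclusion $I^*\inj A^2$ discussed just before the lemma.
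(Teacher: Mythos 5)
Your proof is correct and follows essentially the same route as the paper's: both rest on the fact that finitely generated torsion‑free modules over a Dedekind domain are projective, use Steinitz's structure theorem to produce a free direct summand when $\rg N\ge 2$, and use purity to split $N$ off $M$. The only difference is that you make the reduction $U(M)\cap N=U(N)$ explicit via the retraction, and spell out the rank‑one case, which the paper dismisses as clear.
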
 

\begin{proof} The case of rank $1$ is clear. In the other, recall that all torsion-free finitely generated $A$-modules are projective; by Steinitz's structure theorem for projective modules \cite[\S 4, no 10, Prop. 24]{bbki}, $N$ contains $A$ as  a direct summand. Since $N$ is itself a direct summand of $M$, it thus contains unimodular vectors.
\end{proof}

As in Proposition \ref{p2}, we thus get an equivalence
\begin{equation}\label{eq8}
\AR^u: \Simpl U^*(M) \iso G^{**}(M):=G^*(M) - G^1(M)
\end{equation}
where $G^1(M)=\{L\in G^*(M)\mid \rg L = 1 \text{ and }L\not \simeq A\}$. To compute further, we observe that the inclusion functor $T:G^1(M) \inj G^*(M)$ is \emph{cellular} in the sense of \cite[Def. 2.3.2]{quillen-dedekind}.\footnote{Recall that, by definition, this means that $T$ is fully faithful and $\Hom(d,c)=\emptyset$ for $d\in G^{**}(M)$ and $c\in G^1(M)$.} By \cite[Prop. 2.3.5]{quillen-dedekind}, we thus get a homotopy cocartesian square
\[\begin{CD}
G^{**}(M)\int \F_T @>p>> G^1(M)\\
@V{\epsilon}VV @V{T}VV\\
G^{**}(M) @>\iota >> G^*(M)
\end{CD}\]
where the category $G^{**}(M)\int \F_T$ has objects the inclusions $L\inj N$ for $L\in G^1(M)$, $N\in G^{**}(M)$, and morphisms the commutative squares. This category splits as a coproduct
\[G^{**}(M)\int \F_T=\coprod_{L\in G^1(M)} L\downarrow G^{**}(M). \]

The map $N\mapsto N/L$ induces an isomorphism of posets
\[L\downarrow G^{**}(M)\iso G^*(M/L). \]

Thus the square above becomes
\[\begin{CD}
\displaystyle\coprod_{L\in G^1(M)} G^*(M/L) @>p>> G^1(M)\\
@V{\epsilon}VV @V{T}VV\\
G^{**}(M) @>\iota >> G^*(M)
\end{CD}\]
where $p$ projects $G^*(M/L)$ onto $\{L\}$ and $\epsilon$  is the inverse image. Note that $ H_{n-2}(T(M/L))=0$ for all such $L$, and that $H_{n-3}(G^{**}(M))=0$ by \eqref{eq8} and by considering the chains of $C_*(E(M-\{0\}), E^*(M))$ as in the previous section. 
Hence an exact sequence
\[
 0 \to H_{n-2}(G^{**}(M))\to \tilde\St(M) \to \bigoplus_{L\in G^1(M)} \tilde\St(M/L) \to   0
\]
which gives a recursive computation of $\tilde\St(M)$ in terms of Ash-Rudolph symbols. In particular, taking homology, we find a long exact sequence
\begin{multline}\label{eq6.1}
\dots\to H_p(\Aut(M),H_{n-2}(G^{**}(M))\to H_p(\Aut(M),\tilde \St(M))\\
\to H_p(\Aut(M),\bigoplus_{L\in G^1(M)} \tilde\St(M/L))\to H_{p-1}(\Aut(M),H_{n-2}(G^{**}(M))\to \dots
\end{multline}

The group $\Aut(M)$ permutes the $L$'s, and permutes transitively those in a given isomorphism class (because $L$ is a direct summand of $M$). Hence in \eqref{eq6.1}, we have by Shapiro's lemma
\begin{multline*}
H_p(\Aut(M), \bigoplus_{L\in G^1(M)} \tilde\St(M/L)) \simeq \bigoplus_{\bar L\in \Pic(A)-\{ 0\}} H_p(\Stab_M(L),\tilde\St(M/L))
 \end{multline*}
where $\Stab_M(L)$ denotes the stabiliser of some $L\in \bar L$ in $M$ (note that its action on $\tilde\St(M/L)$ factors through the projection $\Stab_M(L)\to \Aut(M/L)$). For $p=0$, this boils down to $\bigoplus\limits_{\bar L\in \Pic(A)-\{ 0\}} \tilde\St(M/L)_{\Aut(M/L)}$.  This gives a recursive method to compute  $\tilde \St(M)_{\Aut(M)}$ in terms of unimodular symbols.


\section{Relatinship with the van der Kallen-Suslin-Nesterenko complexes} 

Let us now assume that $K$ is infinite. We say that  a (finite, nonempty) subset of $Y\subset M$ is a \emph{frame}  if the elements of $Y$ are linearly independent over $K$. We say that $Y$ is \emph{in general position} if any subset of $Y$ with at most $n$ elements is a frame. This defines two subcomplexes of $E(M-\{0\})$: 
\[\Fr(M)=\Sq^{n-1}\GP(M)\subset \GP(M)\subset E(M-\{0\}).\]

\begin{prop}\label{p4} $\GP(M)$ is contractible.
\end{prop}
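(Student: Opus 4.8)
The plan is to show that $\GP(M)$ is contractible by exhibiting a contracting homotopy in the style of Lemma \ref{l1}, using the hypothesis that $K$ is infinite in an essential way. Fix a vector $x \in M - \{0\}$ which is sufficiently generic; more precisely, we would like to choose $x$ so that adjoining $x$ to any simplex in general position keeps it in general position. The key observation is that the set of ``bad'' vectors (those which, together with some $(n-1)$-element subset of a given frame, fail to be a frame) lies in a finite union of proper subspaces of $V$, and since $K$ is infinite, $M$ is not contained in any finite union of proper subspaces, so such an $x$ exists relative to any \emph{finite} collection of simplices. However, to get an honest contracting homotopy one cannot make $x$ depend on the simplex, so the more robust approach is the following.

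First I would set up the ``star'' argument: for a fixed $x \in M - \{0\}$, let $\GP(M)_x$ be the full subcomplex on vertices $v$ such that $\{x, v\}$ spans a $2$-dimensional space, i.e. $v \notin \langle x\rangle$, together with $x$ itself — no, rather, I would follow the pattern of Lemma \ref{l1} precisely at the level of posets: consider $\Simpl \GP(M)$, the poset of simplices in general position, and try to define functors $Y \mapsto Y \cup \{x\}$. The obstacle is that $Y \cup \{x\}$ need not be in general position. So instead the correct strategy is a two-step (or, a direct) deformation: show that the inclusion of the subcomplex $\GP(M)_x$ consisting of simplices $Y$ with $Y \cup \{x\} \in \GP(M)$ is a homotopy equivalence, and that $\GP(M)_x$ is contractible via the left adjoint $Y \mapsto Y \cup \{x\}$ which lands in the subposet of simplices containing $x$, which has $\{x\}$ as initial object. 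The first of these reductions is where infiniteness of $K$ enters, via Quillen's Theorem A applied to the inclusion: for a simplex $Y \in \GP(M)$, one must show the comma category (fibre) is contractible, and this requires that ``most'' $x$ are compatible with $Y$.

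The cleanest realisation, which I would pursue, is: for each $x$, the map $Y \mapsto Y \cup \{x\}$ is defined on the subposet $\GP(M)_x = \{Y : Y \cup \{x\} \text{ in general position}\}$ and is left adjoint to the inclusion of $\{Y \ni x\}$, so $\GP(M)_x$ is contractible (it deformation retracts onto the cone point $x$). It then remains to show $\GP(M) = \bigcup_x \GP(M)_x$ is contractible knowing each $\GP(M)_x$ is, together with enough intersection data. Since any finite simplex $Y$ lies in $\GP(M)_x$ for infinitely many $x$ (as the bad locus is a finite union of proper subspaces and $K$ is infinite), the covering $\{\GP(M)_x\}$ has the property that every finite subcomplex is contained in some single $\GP(M)_x$; hence $\GP(M) = \varinjlim$ over finite subcomplexes, each mapping into a contractible $\GP(M)_x$, and a colimit argument (every compact set, in particular every sphere, maps into a finite subcomplex, hence into some contractible $\GP(M)_x$) shows all homotopy groups of $\GP(M)$ vanish. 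Combined with the fact that $\GP(M)$ is a CW-complex, Whitehead's theorem gives contractibility.

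The main obstacle, I expect, is making the ``every finite subcomplex sits in some $\GP(M)_x$'' step precise: one needs that for a finite set of frames $Y_1, \dots, Y_k$, there is a single $x$ with $Y_j \cup \{x\}$ in general position for all $j$, which amounts to $x$ avoiding a finite union of proper subspaces of $V$ — fine since $K$ is infinite — but one must also be slightly careful that ``general position'' is about subsets of size $\le n$, so adding $x$ only imposes conditions coming from $(n-1)$-subsets of the $Y_j$, still finitely many proper subspaces to avoid. A secondary, more bookkeeping obstacle is phrasing the colimit/Whitehead argument cleanly rather than invoking a nerve-of-a-cover spectral sequence; I would prefer the elementary statement that $\pi_i(\GP(M)) = 0$ for all $i$ because any based map $S^i \to |\GP(M)|$ factors through a finite subcomplex.
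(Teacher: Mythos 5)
Your ``cleanest realisation'' is precisely the paper's proof: with $\GP(M)^v := \{Y \in \GP(M) \mid Y \cup \{v\} \in \GP(M)\}$, the adjunction $Y \mapsto Y \cup \{v\}$ against the inclusion of the star $\{Y \ni v\}$ (which has initial object $\{v\}$) shows each $\GP(M)^v$ is contractible, and infiniteness of $K$ ensures every finite collection of simplices lies in a single $\GP(M)^v$, so every finite subcomplex includes nullhomotopically and all homotopy groups of $\GP(M)$ vanish. The paper cites Nesterenko--Suslin's Lemma 3.5 for exactly this pattern, so your approach and the published one coincide.
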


\begin{proof} We adapt the proof of Lemma \ref{l1} in the style of \cite[Proof of Lemma 3.5]{NS}: for $v\in M$, let $\GP(M)_v=\{Y\in \GP(M)\mid v\in Y\}$ and $\GP(M)^v=\{Y\in \GP(M)\mid Y\cup \{v\}\in \GP(M)\}$. Since $\Simpl \GP(M)_v$ has a minimal element, it is contractible, hence so is $\GP(M)^v$ by the argument in the proof of Lemma \ref{l1}. Using that $K$ is infinite, for any $Y_1,\dots,Y_r\in \GP(M)$ there exists $v\in M-Y$ such that $Y_i\cup \{v\}\in \GP(M)$ for $i=1,\dots, r$. Hence any finite subcomplex $C$ of $\GP(M)$ is contained in some $\GP(M)^v$; thus the inclusion $C\to \GP(M)$ is nullhomotopic, hence the lemma.
\end{proof}

\begin{cor} $\Fr(M)$ has the homotopy type of a bouquet of $(n-1)$-spheres.
\end{cor}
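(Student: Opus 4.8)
The plan is to deduce this from Proposition \ref{p4} exactly the way Corollary \ref{c1} was deduced from Theorem \ref{p1}, using the machinery of Lemmas \ref{l4}, \ref{l5} and \ref{l6}. Since $\GP(M)$ is contractible, Lemma \ref{l6} applies verbatim: for every $r\ge 0$, the skeleton $\Sq^r\GP(M)$ has the homotopy type of a bouquet of $r$-spheres. Taking $r=n-1$ and recalling that $\Fr(M)=\Sq^{n-1}\GP(M)$ by definition gives the statement immediately.

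Concretely, I would write: by Proposition \ref{p4}, $\GP(M)$ is contractible, so Lemma \ref{l6} (applied with $\Gamma=\GP(M)$ and $r=n-1$) shows that $\Sq^{n-1}\GP(M)=\Fr(M)$ has the homotopy type of a bouquet of $(n-1)$-spheres. That is the entire argument. One could alternatively spell out the two ingredients of Lemma \ref{l6}: Lemma \ref{l5} gives that $\Fr(M)=\Sq^{n-1}\GP(M)$ is $(n-2)$-connected (the map on $\pi_i$ to the contractible space $\GP(M)$ is bijective for $i<n-1$), and $\Fr(M)$ has dimension $\le n-1$ by construction, so Lemma \ref{l4} finishes. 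But invoking Lemma \ref{l6} directly is cleaner.

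There is essentially no obstacle here; the corollary is a one-line consequence of the preceding proposition and the ``well-known lemmas'' section, and the parallel with Corollary \ref{c1} (which is the $r=n-2$, $\Gamma=E(V-\{0\})$ case of the same circle of ideas) is exact. The only point worth a word of care is making sure the connectivity count is off-by-one consistent: an $r$-dimensional $(r-1)$-connected complex is a bouquet of $r$-spheres (Lemma \ref{l4}), and here $r=n-1$, matching the claimed bouquet of $(n-1)$-spheres. I would therefore keep the proof to a single sentence citing Proposition \ref{p4} and Lemma \ref{l6}.

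\begin{proof} By Proposition \ref{p4}, $\GP(M)$ is contractible. Applying Lemma \ref{l6} with $\Gamma=\GP(M)$ and $r=n-1$, we conclude that $\Sq^{n-1}\GP(M)=\Fr(M)$ has the homotopy type of a bouquet of $(n-1)$-spheres.
\end{proof}
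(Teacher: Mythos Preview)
Your proposal is correct and matches the paper's own proof exactly: the paper simply writes ``Apply Lemma \ref{l6}.'' Your one-sentence proof citing Proposition \ref{p4} and Lemma \ref{l6} with $\Gamma=\GP(M)$, $r=n-1$ is precisely this.
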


\begin{proof} Apply Lemma \ref{l6}.
\end{proof}

There is an obvious inclusion $\Sq^{n-2} \GP(M)\subset E^*(M)$, hence a map
\begin{equation}\label{eq6}
H_{n-2}(\Sq^{n-2} \GP(M))\to H_{n-2}(E^*(M))\iso \St(V).
\end{equation}

\begin{prop}\label{p5} The map \eqref{eq6} is surjective.
\end{prop}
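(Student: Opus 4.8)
The plan is to use the contractibility of $\GP(M)$ (Proposition \ref{p4}) to understand the homology of its skeleta, exactly as in Lemma \ref{l6}, and then compare the two truncated chain complexes. By Lemma \ref{l6} applied to $\Gamma=\GP(M)$, the group $H_{n-2}(\Sq^{n-2}\GP(M))$ is the $(n-2)$-th homology of the naïvely truncated complex $\sigma^{\le n-2}\Or_*(\GP(M))$; concretely, it is the group of $(n-2)$-cycles in $\Or_{n-2}(\GP(M))$, since there are no $(n-1)$-chains in the truncation. On the other side, $\St(V)\cong H_{n-2}(E^*(M))$, and again the chains of $E^*(M)$ in degrees $\le n-2$ coincide with those of $E(M-\{0\})$ (this is the observation already used in the proof of Corollary \ref{c1}), so $H_{n-2}(E^*(M))$ is likewise computed as $(n-2)$-cycles modulo boundaries of $(n-1)$-chains in $\Or_*(E(M-\{0\}))$.

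The key step is then the following: the inclusion $\Sq^{n-2}\GP(M)\subset E^*(M)$ induces an inclusion of chain complexes in degrees $\le n-2$, and I claim it is the identity on chains in degree exactly $n-2$. Indeed, a nondegenerate $(n-2)$-simplex of $E(M-\{0\})$ is an $(n-1)$-element subset $\{v_0,\dots,v_{n-1}\}$ of $M-\{0\}$; such a simplex lies in $E^*(M)$ iff $\rg\langle v_0,\dots,v_{n-1}\rangle<n$, which (since there are $n-1<n$ vectors) is automatic only when they are dependent, but in general $E^*(M)$ in degree $n-2$ consists of \emph{all} $(n-1)$-subsets regardless — wait, more carefully: every subset of rank $<n$, and an $(n-1)$-element subset always has rank $\le n-1<n$. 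So $\Or_{n-2}(E^*(M))=\Or_{n-2}(E(M-\{0\}))$, and similarly a subset $Y$ of $\le n-1$ elements lies in $\GP(M)$ iff it is a frame, i.e. linearly independent. Thus $\Or_{n-2}(\Sq^{n-2}\GP(M))\subseteq\Or_{n-2}(E^*(M))$ is the subgroup spanned by the \emph{independent} $(n-1)$-tuples. A class in $H_{n-2}(E^*(M))$ is represented by a cycle $z\in\Or_{n-2}(E(M-\{0\}))$; I must show it is homologous to one supported on independent tuples. But any $(n-1)$-tuple $(v_0,\dots,v_{n-1})$ with a dependency is a boundary of $(n-1)$-chains that land in $E^*(M)$: more precisely, using that $K$ is infinite, one can find $v\in M$ making $(v_0,\dots,v_{n-2},v)$ and similar faces independent, and the relation $\partial(v_0,\dots,v_{n-1},v)$ expresses $(v_0,\dots,v_{n-1})$ modulo independent tuples as a boundary in $E^*(M)$ (all the other faces have $\le n-1$ vectors, hence lie in $E^*(M)$). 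Applying this finitely many times clears all dependent tuples from $z$, at the cost of a boundary; the resulting cycle lies in $\Or_{n-2}(\Fr(M))=\Or_{n-2}(\Sq^{n-2}\GP(M))$ (it is a cycle there too, since its boundary is zero already in $E^*(M)$ and the differential is the restriction). This proves surjectivity.

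The main obstacle is the clearing argument in the last step: one must check that the auxiliary vector $v$ can be chosen simultaneously avoiding \emph{all} the linear-dependence conditions needed to keep every relevant face a frame, and that the induction terminates. Both are handled by the same infinite-field genericity already exploited in Proposition \ref{p4}: the set of bad $v$ is contained in a finite union of proper subspaces of $V$, hence is not all of $M$; and termination follows by working one dependent tuple at a time, noting that the correcting boundary introduces only tuples of the same or smaller "defect" that can be ordered by a suitable finite induction, or simply by noting that a single cycle $z$ involves only finitely many simplices, so finitely many corrections suffice. I would also remark that this gives another proof that $\Fr(M)$ surjects onto the Steinberg module, recovering the van der Kallen–Suslin–Nesterenko picture, which is presumably the point of the section.
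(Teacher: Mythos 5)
Your approach is genuinely different from the paper's, and the clearing step contains a gap that your proposed remedies do not close. When you replace a dependent $(n-1)$-element simplex $\sigma=\{v_0,\dots,v_{n-2}\}$ by the other faces of $\sigma\cup\{v\}$, those faces $\{v_0,\dots,\hat v_i,\dots,v_{n-2},v\}$ need not become frames even for a generic $v$: if $\sigma$ has rank $r\le n-2$ and deleting $v_i$ drops the rank to $r-1$, then adjoining a generic $v$ brings the rank only back to $r$, and the face is again an $(n-1)$-element set of rank $r$, i.e.\ dependent with the \emph{same} defect $(n-1)-r$. So neither a ``defect decreases'' induction nor the observation that $z$ has finite support guarantees termination: each correction can manufacture new dependent simplices indefinitely. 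The idea can be salvaged, but only by coning off all dependent terms at once: split $z=z^{\mathrm{dep}}+z^{\mathrm{ind}}$, pick one $v$ generic for the whole cycle, note that $z^{\mathrm{dep}}*v$ lies in $\Or_{n-1}(E^*(M))$ because dependent $\sigma$'s have rank $\le n-2$, and use $\partial z^{\mathrm{dep}}=-\partial z^{\mathrm{ind}}$ to conclude $z\equiv z^{\mathrm{ind}}\mp(\partial z^{\mathrm{ind}})*v$ modulo boundaries in $E^*(M)$, the right side being supported on frames once $v$ avoids finitely many proper subspaces.

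The paper sidesteps all of this by going up one degree to relative homology: it shows $H_{n-1}(\GP(M),\Sq^{n-2}\GP(M))\to H_{n-1}(E(M-\{0\}),E^*(M))$ is surjective by comparing the quotient chain complexes $\Or_*(\GP(M))/\sigma^{\le n-2}\Or_*(\GP(M))$ and $\Or_*(E(M-\{0\}))/\Or_*(E^*(M))$. Both vanish in degrees $<n-1$, and in degree $n-1$ both are the free group on linearly independent $n$-element subsets, so the map of complexes is an isomorphism there; surjectivity on $H_{n-1}$ (a quotient of degree $n-1$) is then formal, and Lemma \ref{l6} identifies this with the desired map on $H_{n-2}$. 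This requires no genericity whatsoever. If you want to keep your degree-$(n-2)$ viewpoint, you must replace the simplex-by-simplex clearing with the one-shot coning above; as written, the termination claim is unjustified.
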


\begin{proof} Equivalently, we show that the map
\[H_{n-1}(\GP(M),\Sq^{n-2} \GP(M))\to H_{n-1}(E(M-\{0\}),E^*(M))\]
is surjective. Using Lemma \ref{l6}, these groups are obtained as the homology of the morphism of complexes 
\begin{equation}\label{eq7}
\Or_*(\GP(M))/\sigma^{\le n-2} \Or_*(\GP(M))\to \Or_*(E(M-\{0\})/\Or_*(E^*(M))
\end{equation}
(oriented chains). Both complexes are $0$ in degree $< n-1$, and \eqref{eq7} is an isomorphism in degree $n-1$.
\end{proof}

Unfortunately, \eqref{eq6} is far from being an isomorphism: for $n=2$ for example, its left hand side is free on the nonzero elements of $M$ while its right hand side is free on the lines of $M$ (or $V$). In particular, unlike its right hand side, the left hand side of \eqref{eq6} heavily depends on the choice of $A$ inside its field of fractions $K$. For a general $n$, the left hand side of \eqref{eq6} is presented by Relation (d) of p. \pageref{rel}.

\end{document}